\newtheorem{lemma}{Lemma}
\newtheorem*{theorem}{Theorem}
\newtheorem{definition}{Definition}
\newtheorem*{conjecture}{Conjecture}
\begin{document}

\title{On equidissection of balanced polygons}
\author{Daniil Rudenko}
\maketitle

\begin{abstract}

In this paper we show that a lattice balanced polygon of  odd area cannot be cut into an odd number of triangles of equal areas. First result of this type was obtained by Paul Monsky  in 1970.  He proved that a square cannot be cut into  an odd number of triangles of equal areas. In 2000 Sherman Stein conjectured that the same holds for any balanced polygon. 

We also show  connections between the equidissection problem and  tropical geometry.
\end{abstract}

\section{Equidissection problem}
\begin{theorem} [P. Monsky, 1970]
A square cannot be cut\footnote{By the phrase  \textit{polygon B is cut into triangles} we mean that B can be presented as a union of a finite number of triangles  so that the interiors of the triangles have an empty intersection with each other.  Fig. \ref{triangulation} illustrates this.} into an odd number of triangles\footnote{Throughout this article "triangle" is taken to include the degenerate case.} of equal areas.
\end{theorem}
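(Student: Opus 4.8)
The plan is to prove Monsky's theorem using the classical two-ingredient approach: a $2$-adic valuation argument combined with a combinatorial (Sperner-type) lemma. First I would extend the $2$-adic valuation $v_2$ on $\mathbb{Q}$ to a valuation $v$ on all of $\mathbb{R}$; this requires the axiom of choice, but only its existence is needed, together with the facts that $v(xy)=v(x)+v(y)$, $v(x+y)\geq\min(v(x),v(y))$ with equality when the two valuations differ, $v(1)=0$, and $v(1/2)<0$. Using $v$, I would $3$-color the points of the plane: a point $(x,y)$ gets color $A$ if $v(x)>0$ and $v(y)>0$; color $B$ if $v(x)\leq 0$ and $v(x)\leq v(y)$; and color $C$ if $v(y)\leq 0$ and $v(y)<v(x)$. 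The key algebraic lemma is that the area of a triangle whose three vertices receive the three distinct colors $A,B,C$ has valuation $v(\text{area})<0$; this follows by writing the doubled area as a $2\times 2$ determinant in the shifted coordinates and checking that, because of how the color classes constrain the valuations of the entries, the determinant's valuation is negative. I call such a triangle a \emph{rainbow} triangle.

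Next I would establish the combinatorial heart: in any dissection of the unit square into triangles, the number of rainbow triangles is odd. The standard route is a Sperner-type counting argument. One checks first that the colors behave affinely well enough that any line segment contains points of at most two colors, so each triangle edge has a well-defined parity of $AB$-colored sub-segment endpoints along it. Then one counts, over all triangles in the dissection, the number of boundary edges colored $\{A,B\}$ at their endpoints; each rainbow triangle contributes an odd count and each non-rainbow triangle an even count, while interior edges of the dissection cancel in pairs. This reduces the claim to showing the square's own boundary carries an odd number of $A$-to-$B$ transitions, which one verifies directly from the coloring of the four corners $(0,0),(1,0),(0,1),(1,1)$ — they receive colors $A$, $B$, $C$, and (say) $B$ respectively — and the fact that each side is a segment with only two colors present.

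Finally I would combine the two ingredients. Suppose the unit square is cut into $n$ triangles of equal area $1/n$. By the combinatorial lemma there is at least one rainbow triangle $T$, so by the algebraic lemma $v(\mathrm{area}(T))<0$. But $\mathrm{area}(T)=1/n$, so $v(1/n)=-v(n)<0$, which forces $v(n)>0$, i.e. $n$ is even. Hence no dissection into an odd number of equal-area triangles exists.

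The main obstacle, and the step deserving the most care, is the passage from "each triangle has a well-defined edge parity" to "rainbow triangles are odd in number" — that is, making the Sperner double-counting rigorous in the presence of \emph{degenerate} triangles and of vertices of one triangle lying in the interior of an edge of another (the configurations Fig.~\ref{triangulation} is meant to allow). One must argue that subdividing an edge at such a vertex does not change any relevant parity, so that one may assume a genuine simplicial subdivision before counting; handling this cleanly, rather than the valuation-theoretic input, is where the real work lies.
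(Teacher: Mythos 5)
Your outline is correct and is, in substance, Monsky's original 1970 proof; but it is a genuinely different route from the one this paper takes, since the paper does not reprove the theorem by Sperner counting at all --- it cites Monsky and then obtains the statement as the special case $B=[0,1]^2$ of Theorem \ref{main}. The two arguments share the first ingredient exactly: the extension of $\nu_2$ from $\mathds{Q}$ to $\mathds{R}$, the three-coloring of the plane, and the lemma that a triangle carrying all three colors has area of negative valuation (Lemma \ref{one} here; your determinant check is the same computation, and your corner colors $A,B,C,B$ are right). They diverge on the combinatorial half. Where you double-count $AB$-segments over the triangles of the dissection and compare against the boundary of the square, the paper packages the same parity information homologically: the coloring becomes a simplicial map to $K_3\cong S^1$, Lemma \ref{two} shows that a dissection into triangles of non-negative area valuation forces $Deg(\partial M,\pi^{\mathcal{A}})=0$, and Lemmas \ref{three} and \ref{three+} show that the boundary of an odd-area lattice parallelogram (in particular the unit square) has non-zero class in $H_1(K_4,\mathbb{Z})$, a contradiction. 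The degree formalism buys generality --- it runs over a family of colorings $\pi^{\mathcal{A}}$ and extends to all balanced lattice polygons of odd area --- while your Sperner count is more elementary and self-contained for the square. You correctly identify the delicate step in your route as the reduction to a genuine simplicial subdivision in the presence of degenerate triangles and of vertices lying in the interiors of other triangles' edges; note that the homological version must confront exactly the same refinement before $\partial M$ can be treated as a subcomplex of the triangulation, so this is not a cost peculiar to your approach.
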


\begin{figure}[hbtp]
\centering
\includegraphics[width=2.7in, height=2.4in]{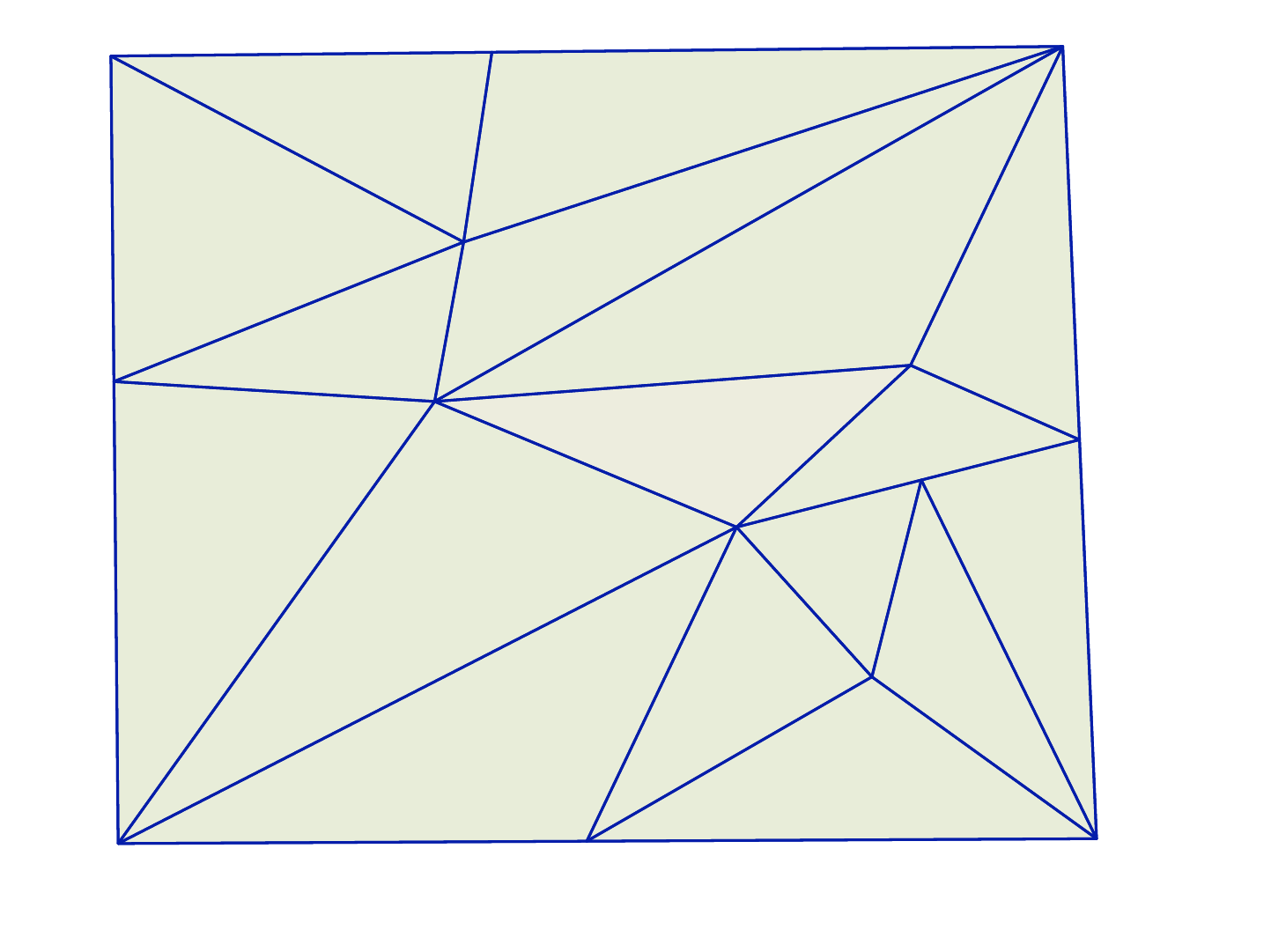}
\caption{A square is cut into triangles}
\label{triangulation}
\end{figure}	

The only known proof of this theorem was published by Monsky in 1970 \cite{Mo1}. The proof is based on two key ideas: the Sperner's Lemma and the coloring of the plane in three colors based on a 2--adic valuation.

After that, several generalizations of Monsky's results appear.  The first generalization was conjectured by Stein and proved by Monsky in 1990 \cite{Mo2}. It claims that a centrally symmetric polygon cannot be cut into an odd number of triangles of equal areas.  Though it is based on the same idea of 3-coloring, this proof is technically  more challenging than the proof in the case of the square and uses a non-trivial homological technique.  

In 1994 Bekker and Netsvetaev proved similar statement in higher dimensions \cite{BM}.

 To state another generalization we need a definition. Let us call  a finite union of squares of area 1 with integer coordinates of vertices a \textit{polyomino}.  First, Stein proved in 1999      \cite{St3} that a polyomino of  odd area cannot be cut into an odd number of triangles of equal areas, and in 2002 Praton \cite{Pr1} proved the same for an even-area polyomino.
In 2000  Stein \cite{St1} made  a conjectural generalization of Theorem 1, see also \cite{St2}.

Let $B$ be a plane polygon with clockwise  oriented boundary.  $B$ is called  \textit{balanced} if its edges can be divided into pairs so that in each pair edges are parallel, equal in length and have opposite orientation (the edges are oriented, their orientation comes from the orientation of the boundary).

Now we are ready to formulate the Stein Conjecture.

\begin{conjecture} [S. Stein, 2000] \label{stein conjecture}
A balanced polygon cannot be cut into an odd number of triangles of equal areas.
\end{conjecture}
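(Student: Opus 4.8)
The plan is to run Monsky's valuation-and-Sperner machinery, but to replace the explicit coordinate computation that works for the square by a structural parity argument that exploits the edge-pairing of a balanced polygon. First I would fix, once and for all, an extension of the $2$-adic valuation $v:\mathbb{Q}^{\times}\to\mathbb{Z}$ to a valuation $v:\mathbb{R}^{\times}\to\Gamma$ with values in an ordered abelian group $\Gamma\supseteq\mathbb{Z}$, normalized by $v(2)=1$ and $v(0)=+\infty$; such an extension exists by Chevalley's extension theorem (or by a Zorn's-lemma argument). Using it I color each point $p=(x,y)\in\mathbb{R}^{2}$ by
\begin{equation*}
A\ \text{if}\ v(x)>0,\ v(y)>0;\qquad B\ \text{if}\ v(x)\le 0,\ v(x)\le v(y);\qquad C\ \text{if}\ v(y)\le 0,\ v(y)<v(x).
\end{equation*}
The two fixed pillars of the argument are then Monsky's area lemma --- a triangle carrying all three colors has doubled area $D$ with $v(D)\le 0$, hence area $a$ with $v(a)\le -1$ --- and the planar Sperner lemma: for any triangulation of $B$ refining the given dissection, the number of rainbow (three-colored) triangles is congruent modulo $2$ to the number of $AB$-segments lying on $\partial B$.

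Suppose for contradiction that a balanced polygon $B$ is cut into an odd number $n$ of triangles of equal area, so each has area $\operatorname{Area}(B)/n$. Since dissectability into $n$ equal triangles and the balanced property are both invariant under the dilation $p\mapsto 2^{k}p$, I would first normalize so that $v(\operatorname{Area}(B))\ge 0$; for odd $n$ this gives $v(\operatorname{Area}(B)/n)=v(\operatorname{Area}(B))\ge 0$, which is incompatible with the value $\le -1$ forced on any rainbow triangle by Monsky's lemma. Hence the whole theorem reduces to the single combinatorial assertion that, after this normalization, $\partial B$ carries an \emph{odd} number of $AB$-segments, so that Sperner's lemma produces at least one rainbow triangle.

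To attack the boundary count I would make it additive over the edges of $B$. Along a straight segment the color can change only between $A$ and $B$, between $B$ and $C$, or between $C$ and $A$, as the valuations of the two coordinates cross $0$ and cross each other; I would package the resulting $AB$-parity of a single edge $e$ from $P$ to $Q$ into a local index $\iota(e)\in\mathbb{Z}/2$ depending only on the colors of $P,Q$ and on the valuation-type of the edge vector $\vec{e}=Q-P$ (its ``tropical direction'' $\operatorname{trop}(\vec{e})$, i.e.\ the comparison of the $v$ of its coordinates). The total boundary count is then $\sum_{e}\iota(e)\pmod 2$. Here the balanced hypothesis enters decisively: the edges split into pairs $\{e,e'\}$ with $\vec{e'}=-\vec{e}$, so the two edges of a pair are parallel translates with opposite tropical orientation, and I expect their indices to combine into a term that depends only on the common direction and on how the coloring's ``walls'' $\{v(x)=0\}$, $\{v(y)=0\}$, $\{v(x)=v(y)\}$ separate the four endpoints --- a genuinely tropical intersection number of the segment with the coloring's tropical curve. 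Summing over pairs should collapse $\sum_{e}\iota(e)$ to a single invariant of the multiset of edge directions, and the goal is to show this invariant equals $1$.

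The main obstacle is precisely this last step: proving that the collapsed invariant is odd for \emph{every} balanced polygon, with arbitrary real coordinates and arbitrary edge-pairing, rather than merely for the lattice, odd-area case in which the indices $\iota(e)$ can be read off directly from integer parities. Two difficulties must be controlled simultaneously. First, the area normalization $v(\operatorname{Area}(B))\ge 0$ and the parity of the boundary count both depend on the placement of $B$ relative to the fixed origin of the coloring (which is neither translation- nor scale-invariant), so one cannot adjust them independently; reconciling them requires choosing the dilation together with a translation by a point of high valuation. Second, one must rule out the degenerate possibility that all pairwise contributions cancel to an even count --- exactly what happens for a non-balanced polygon, and exactly what the balanced structure is meant to forbid. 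I expect the cleanest route through both difficulties to be the tropical reformulation hinted at above: interpreting $\sum_{e}\iota(e)$ as the mod-$2$ tropical intersection of $\partial B$ with the tropical line cut out by the three color-walls, and identifying the balanced condition on $B$ with the tropical balancing condition that forces this intersection number to be odd.
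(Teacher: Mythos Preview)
The statement is Stein's conjecture, which the paper does \emph{not} prove; it is stated as open, and only the special case of a balanced polygon with integer vertices and odd integer area is established. Your outline has a genuine gap exactly where you locate it: after normalizing $v(\operatorname{Area}(B))\ge 0$, you must show the boundary $AB$-count is odd, and you give no mechanism for this beyond a hoped-for tropical interpretation. The reduction to a boundary statement via Monsky's coloring and Sperner is standard; that boundary statement is essentially the whole conjecture, and your proposal does not advance it. The tropical idea is only a heuristic: the ``balancing'' of a tropical curve is a local condition on weighted edge directions at each vertex, whereas the balancedness of $B$ is a global edge-pairing with no such local constraint, and you give no reason the two should interact to force an odd intersection number with the tripod of color walls.

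It is also worth noting that your framework is strictly weaker than what the paper already needs in the lattice case it does settle. You invoke Sperner mod $2$, so you require some single coloring for which the boundary degree is \emph{odd}. The paper instead proves that the \emph{integer} degree must vanish (its Lemma~2) and uses three colorings simultaneously to assemble a class in $H_1(K_4,\mathbb{Z})\cong\mathbb{Z}^3$, namely $(\mu_2+\mu_3)\sigma_1+(\mu_1+\mu_3)\sigma_2+(\mu_1+\mu_2)\sigma_3$ with $\mu_1+\mu_2+\mu_3\equiv\operatorname{Area}(B)\pmod 2$. When all $\mu_i$ are odd this class is visibly nonzero, so the paper's argument succeeds, yet every single-coloring degree $\mu_i+\mu_j$ is even, so a mod-$2$ boundary count alone sees nothing. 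Hence even restricted to the lattice, odd-area case, your approach as written would have to be upgraded to an integer-degree, multi-coloring argument before it could work; for the general conjecture no such upgrade is known.
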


In this note we will present a proof of a partial case of Conjecture \ref{stein conjecture}. Namely, we will prove the following theorem.

\begin{theorem} [Non-equidissectibility of a balanced lattice polygon] \label{main}  
Consider a balanced  polygon $B$ of the integer odd area and assume that the coordinates of all the vertices are integer numbers. Then $B$  cannot be cut into an odd number of triangles of equal areas.
\end{theorem}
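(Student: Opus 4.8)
The plan is to follow the overall architecture of Monsky's original argument but to replace the single 2-adic coloring by a family of colorings indexed by valuations, together with a counting argument that exploits the balanced condition. First I would recall the Monsky setup: extend the $2$-adic valuation $v_2$ on $\mathbb{Q}$ to a valuation $v$ on $\mathbb{R}$ (using the axiom of choice), and color a point $(x,y)\in\mathbb{R}^2$ by one of three colors $A,B,C$ according to which of $v(x)$, $v(y)$, $v(1)=0$ is strictly smallest (with a fixed tie-breaking rule). The crucial local fact is that any triangle whose three vertices receive all three colors — a \emph{rainbow} triangle — has area of valuation $\le 0$, in particular nonzero; equivalently a triangle of area $1/n$ with $n$ odd cannot be rainbow. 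The crucial global fact is Sperner's Lemma applied to the dissection: if the unit square is dissected into $n$ triangles of equal area $1/n$, then the number of rainbow triangles is odd, hence at least one exists, forcing $n$ to be even. The challenge for a general balanced lattice polygon $B$ is that $B$ need not be a square, so one cannot directly read off a Sperner-type parity statement, and the area $|B|$ is an odd integer rather than $1$, so the valuation estimate on sub-triangles must be recalibrated.

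The next step is to set up the combinatorial/homological counting. Given an equidissection of $B$ into $N$ triangles of equal area $|B|/N$, I would form the $1$-chain that is the sum (mod $2$) of the boundaries of all triangles in the dissection; interior edges cancel in pairs, so this chain equals $\partial B$ mod $2$. Following Monsky's 1990 technique for centrally symmetric polygons, I would count, mod $2$, the number of edges in the dissection whose two endpoints carry colors $A$ and $B$ (the ``$AB$-segments''); a boundary-versus-interior double count shows that the total number of $AB$-segments on $\partial B$ has the same parity as the number of rainbow triangles in the dissection. Here is where the \emph{balanced} hypothesis enters decisively: the edges of $\partial B$ pair up into parallel, equal-length, oppositely-oriented pairs, and under an affine-linear change of coordinates preserving the lattice (or rather, by directly analyzing how the three-coloring behaves under translation by a lattice vector and reflection), the contributions of the two edges in each pair to the count of $AB$-segments on the boundary cancel mod $2$ — provided the polygon is a \emph{lattice} polygon so that the paired edges differ by an integer translation, which is exactly the extra hypothesis that distinguishes Theorem~\ref{main} from the full Stein Conjecture. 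Thus the number of rainbow triangles is even; I will need a supplementary argument (e.g. a second coloring or a direct parity computation on a single auxiliary triangle, as in Monsky's work) to upgrade ``even'' to ``contradiction with $N$ odd'', by showing that in fact the rainbow count must also be \emph{odd} when both $N$ and $|B|$ are odd.

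To make the valuation estimate work for area $|B|/N$ rather than $1/N$, I would use that $|B|$ is an odd integer, so $v_2(|B|)=0$ and hence $v(|B|/N)=v(1/N)=-v_2(N)$; if $N$ is odd this valuation is $0$, so a triangle of the dissection again cannot be rainbow by the same determinant computation Monsky uses (the signed area of a triangle with vertices in classes $A,B,C$ lies outside the ``small'' set). This is the clean part. The part I expect to be the main obstacle is establishing the boundary cancellation for each balanced pair of edges in a way that is genuinely mod-$2$ robust: one must track not just the colors of the two endpoints of a boundary edge but the parity of the number of color-changes along that edge induced by all the dissection vertices lying on it, and show that a lattice translation carrying one edge of a balanced pair to the (reversed) other edge induces a color-preserving bijection on the relevant vertex sets. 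The subtlety is that the three-coloring is \emph{not} translation-invariant in general — translating by a vector with $v$-small coordinates permutes the colors — so one has to check that for an \emph{integer} translation vector the permutation of colors is trivial on the set of points that actually occur (all of which have rational, indeed controlled, coordinates because the dissection of a lattice polygon into equal-area pieces forces the vertices to have coordinates in a suitable localization), and that the reversal of orientation interacts correctly with the $AB$-count. Packaging this cleanly — perhaps by passing to the group $(\mathbb{Z}/2)^2$ of ``color classes mod translation'' and phrasing the whole count as the evaluation of a cocycle — is where the real work lies, and it is also the place where the argument would break for a non-lattice balanced polygon, consistent with the fact that the full conjecture remains open.
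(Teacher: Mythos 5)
Your high-level framework (an extended $2$-adic valuation, the three-coloring, the fact that a rainbow triangle has area of negative valuation, and a Sperner-type boundary count) matches the paper's starting point, but the step where you invoke the balanced hypothesis is where the argument breaks, and not in a way a ``supplementary argument'' can repair. You claim that for each balanced pair of boundary edges the contributions to the mod-$2$ count of $AB$-segments cancel, so the number of rainbow triangles is even. First, this cancellation is false: the two edges of a pair differ by an integer translation $w$, and when $w\not\equiv(0,0)\pmod 2$ that translation permutes the colors of lattice points nontrivially. Already for the unit square (a balanced lattice polygon of odd area) the balanced pair consisting of the edges from $(1,0)$ to $(1,1)$ and from $(0,1)$ to $(0,0)$ contributes $0+1=1$ to the $AB$-count under Monsky's coloring; the total boundary count is odd, which is precisely what makes Monsky's proof of his own theorem work. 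Second, even if the cancellation held it would prove the wrong parity: the valuation estimate shows that a dissection into triangles of area $S/N$ with $\nu_2(S/N)=0$ contains \emph{zero} rainbow triangles, and zero is even, so ``rainbow count even'' yields no contradiction. To conclude you would need the boundary count to be \emph{odd}, and the hoped-for second half (``showing that the rainbow count must also be odd when $N$ and $|B|$ are odd'') cannot exist as stated, since the boundary $AB$-parity depends only on the coloring of $\partial B$ and not on the dissection at all; if both parities were provable you would have shown that no balanced lattice polygon of odd area admits \emph{any} triangulation whatsoever.

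The missing idea, which is the actual content of the paper, is that a single mod-$2$ count attached to a single coloring is too coarse. The paper replaces it by the class $\langle\partial B\rangle$ of the boundary reduced mod $2$ into the complete graph $K_4$ on $(\mathbb{Z}/2)^2$, an element of $H_1(K_4,\mathbb{Z})\cong\mathbb{Z}^3$, and extracts all three coordinates by running the degree-vanishing argument (your Sperner count, in homological form) for \emph{three} different colorings twisted by the area-preserving affine maps $E$, $U$, $V$ (Lemma \ref{three}). The balanced hypothesis is then used on the other side of the ledger: an induction rewriting $\partial B$ by elementary transpositions of its edge vectors reduces everything to lattice parallelograms, for which a direct computation shows the class is nonzero exactly when the area is odd; this gives $\langle\partial B\rangle=\mu_1(\sigma_2+\sigma_3)+\mu_2(\sigma_3+\sigma_1)+\mu_3(\sigma_1+\sigma_2)$ with $\mu_1+\mu_2+\mu_3\equiv \mathrm{Area}(B)\pmod 2$ (Lemma \ref{four}), hence a nonzero class for odd area, contradicting the vanishing forced by the dissection. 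Your proposal contains neither the passage to the richer $\mathbb{Z}^3$-valued invariant nor the reduction of a balanced polygon to parallelograms, and these are the two ideas that carry the proof. (A further small error: dissection vertices need not have rational or ``controlled'' coordinates, so the color-preserving bijection of vertex sets you describe is unavailable even in principle; fortunately it is also unnecessary, since by property \textbf{P1} the $AB$-parity of a segment depends only on the colors of its endpoints.)
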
	
For an example of a balanced lattice polygone of area 15, see Fig. \ref{lattice}.

\begin{figure}[hbtp]
\centering
\includegraphics[width=3in]{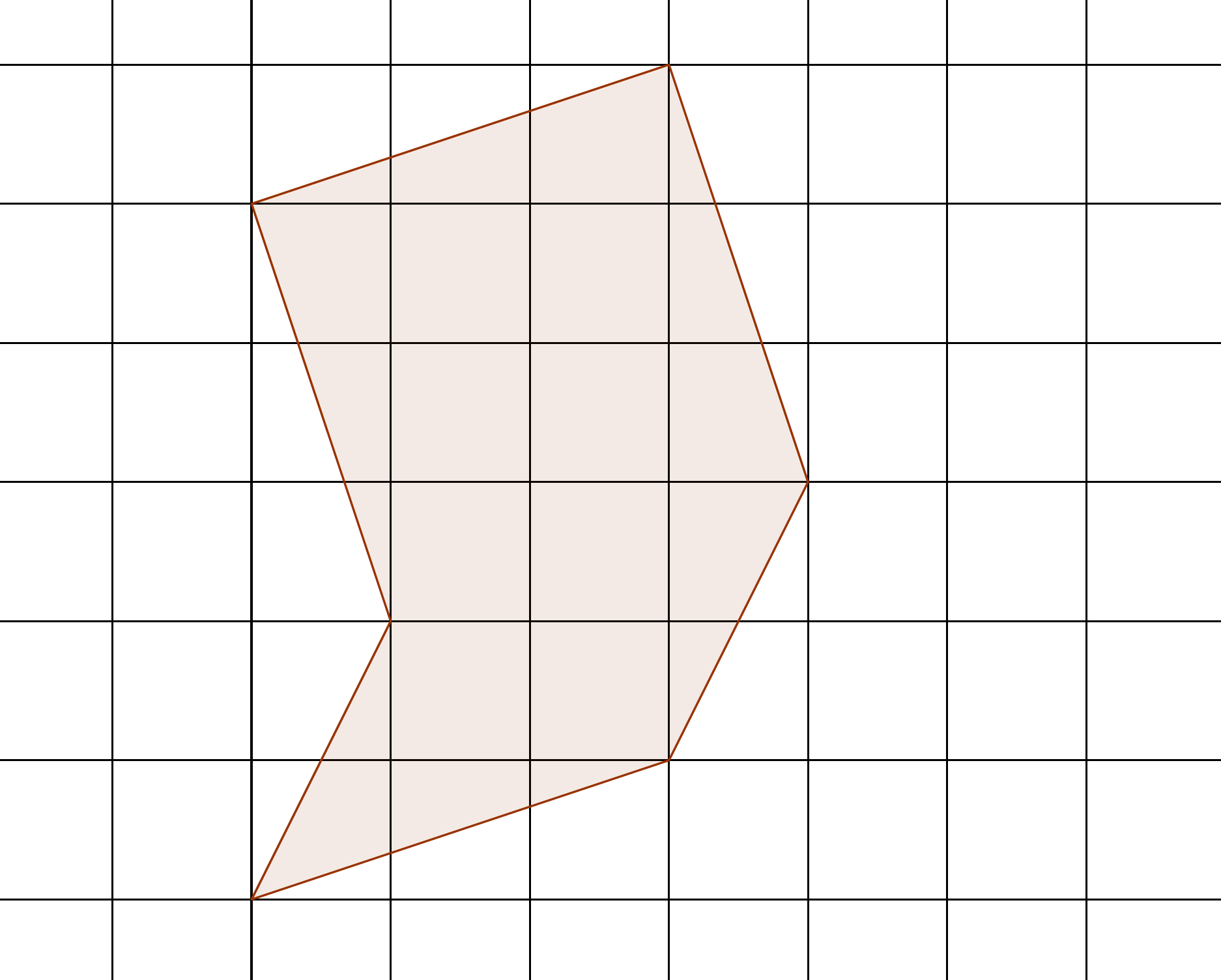}
\caption{Balanced lattice polygon of area 15}
\label{lattice}
\end{figure}

The proof of  nonexistence of equidissection of a balanced lattice polygon consists of several steps. 

In section $2$ we review the coloring of the plane in three colors introduced by Monsky.  

In section $3$ we introduce  a notion of a \textit{degree} of a broken line. It is an  integer  that depends both on a coloring and a broken line. We prove that if a polygon can be cut into triangles with nonnegative $2$--adic valuations of araes, then its degree  is $0$.  

In section $4$ previous results are applied to the case of a lattice polygon.

The proof of the nonexistence of equidissection of a balanced lattice polygon is finished in section $5$.

In the  appendix we show connections between tropical geometry and $3$-colorings of projective plane.

\parskip 9pt
{\bf Acknowledgments}.
My gratitude goes to Sergei Tabachnikov for inspiring me to write this article. Also to Sherman Stein for proposing the conjecture  and for his constructive criticism of my nascent ideas. 
I am especially grateful to Nikolai Mnev, without whose guidance and support this article would not have been possible.

        \section{Tropical colorings}
The main tool for us will be a special type of coloring of a plane in 3 colors. To begin with, let us recall the notion of a discrete valuation and sketch its basic properties.
A function $\nu_2  : \mathds{R}  \longrightarrow  \mathds{R} \cup \{ \infty \} $ is a 2--adic valuation on the field of real numbers if for any two numbers $a,b \in \mathds{R}$ the following properties hold:
\begin{description}
\item{Property 1:} $\nu_2(ab)=\nu_2(a) + \nu_2(b)$, $\nu_2(\frac{a}{b})=\nu_2(a) - \nu_2(b)$,
\item{Property 2:} $\nu_2(a+b)\geqslant \it{min}\{\nu_2(a) , \nu_2(b)\}$,
\item{Property 3:} if $\nu_2(a)<\nu_2(b)$ then $\nu_2 (a+b)=\nu_2(b)$,
\item{Property 4:} $ \nu_2 (0) = \infty$,
\item{Property 5:} It extends standart 2--adic valuation on rationals:
$$  \forall q \in \mathds{Q}  \setminus \{0\} \;  \; \nu_2(q)=s \iff q=2^s\frac{2k+1}{2l+1} \text{ for some } k, l, s \in \mathds{Z}.  $$
\end{description}

The existence of such a function follows from the theorem of the extension of valuations,  see  \cite{La1}. This function is not unique and its construction is based on the Axiom of Choice.

Our goal now is to construct a family of 3--colorings of a plane (we will call these colorings "tropical")  with two properties:

\textbf{(P 1)} On any line points of only two colors occur.

\textbf{(P 2)} For any triangle  with vertices having all 3 different colors its area has a negative 2--adic valuation.

\begin{figure}[hbtp]
\centering
\includegraphics[width=3in]{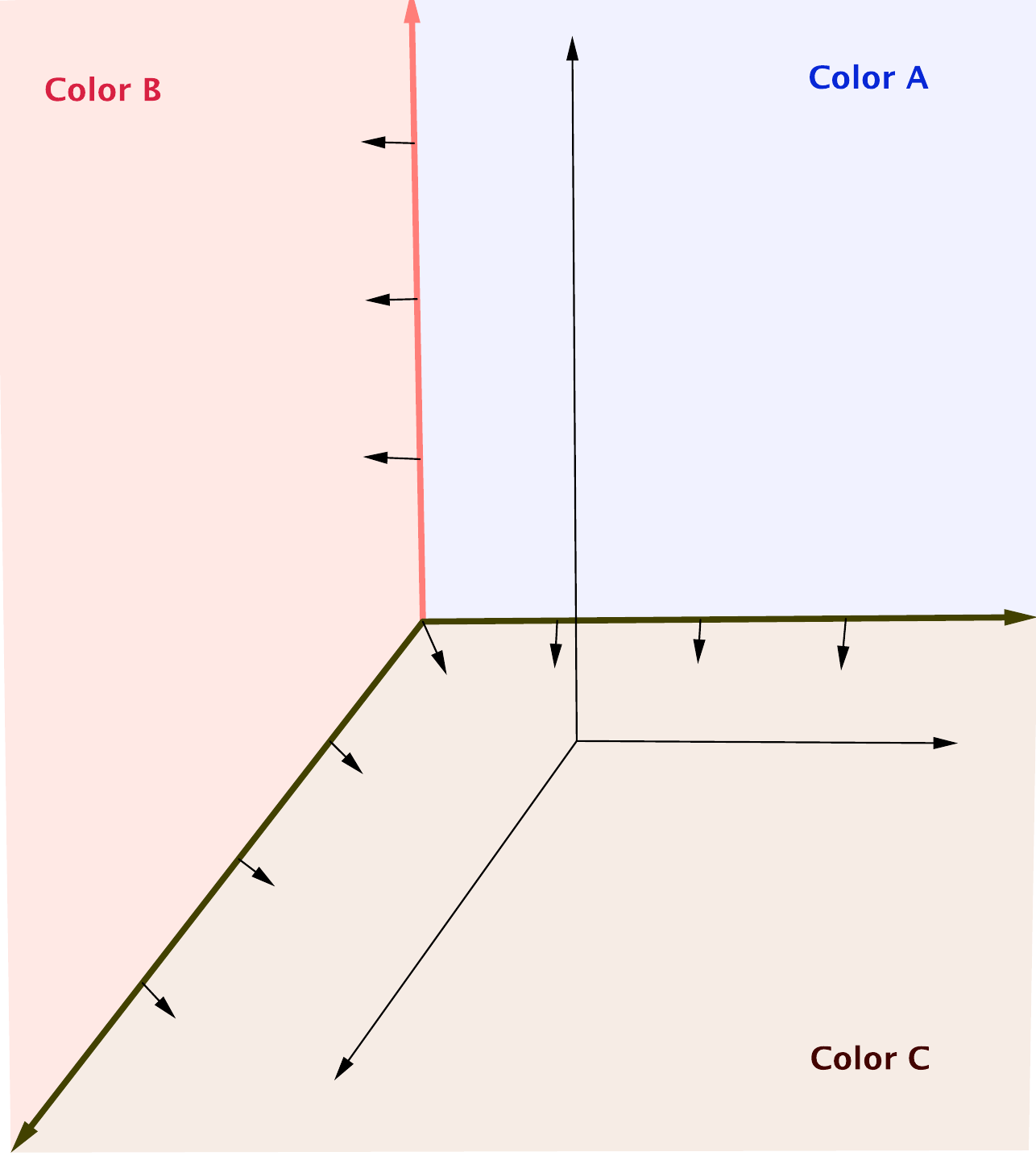}
\caption{Tropical coloring of the plane and the image of the line}
\label{Tropical coloring}
\end{figure}

Let us color  points in the plane in three colors $A, B, C$ according to the following rule: a  point $Z$ with coordinates $(x,y)$ is colored

 in color $A$, if $ \nu_2(x)>0, \nu_2(y)>0$, 

 in  color $B$, if $\nu_2(y)\leqslant 0, \nu_2(x)>\nu_2(y)$, 
 
 in  color $C$, if $ \nu_2(x)\leqslant 0, \nu_2(y)\geqslant \nu_2(x)$.\\
 
 This defines a map from the plane to a three-element set $$\pi: \mathds{R}^2  \longrightarrow\{A,B,C\}.$$ 
On Fig. 3 the way of coloring is presented in coordinates $\nu_2(x), \nu_2(y)$.

 For any area-preserving affine transformation $ \mathcal{A}\in \mathds{R}^2 \rtimes SL_2(\mathds{R})$, we can define another coloring $\pi^\mathcal{A}$ by the rule
 $$\pi^\mathcal{A}(Z)=\pi(\mathcal{A}(Z)), \text{for any point Z}.$$ 
This defines a family of 3--colorings, which we will call tropical.

\begin{lemma} \label{one}
For the  3--coloring $\pi^\mathcal{A}$  properties \textbf{P1} and \textbf{P2} hold.
 \end{lemma}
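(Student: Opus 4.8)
The plan is to reduce at once to the basic coloring $\pi$, deduce \textbf{P1} from \textbf{P2}, and establish \textbf{P2} by a direct $2$-adic estimate of the area of a triangle with one vertex of each color. For the reduction: every $\mathcal{A}\in\mathds{R}^2\rtimes SL_2(\mathds{R})$ is an affine bijection of the plane whose linear part has determinant $1$, so it sends lines to lines and preserves areas. Hence the $\pi^{\mathcal{A}}$-colors occurring on a line $\ell$ coincide with the $\pi$-colors occurring on the line $\mathcal{A}(\ell)$, and a triangle $T$ whose vertices are colored $A,B,C$ by $\pi^{\mathcal{A}}$ is mapped to a triangle $\mathcal{A}(T)$ whose vertices are colored $A,B,C$ by $\pi$ with $\mathrm{area}(\mathcal{A}(T))=\mathrm{area}(T)$; so it suffices to prove \textbf{P1} and \textbf{P2} for $\pi$. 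Also \textbf{P2} already implies \textbf{P1}, since three collinear points of three distinct colors would span a degenerate triangle of area $0$ while $\nu_2(0)=\infty$ is not $<0$; and the computation below for \textbf{P2} never assumes the three colored points are in general position, so it rules out the collinear case directly.

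To prove \textbf{P2} for $\pi$, let $P_A=(x_1,y_1)$, $P_B=(x_2,y_2)$, $P_C=(x_3,y_3)$ be colored $A$, $B$, $C$; unwinding the definition of the coloring,
\[
\begin{gathered}
\nu_2(x_1)>0,\quad \nu_2(y_1)>0,\quad \nu_2(y_2)\leqslant 0,\quad \nu_2(x_2)>\nu_2(y_2),\\
\nu_2(x_3)\leqslant 0,\quad \nu_2(y_3)\geqslant\nu_2(x_3).
\end{gathered}
\]
In particular $x_3\neq0$ and $y_2\neq0$, so $\nu_2(x_3)+\nu_2(y_2)$ is a finite number $\leqslant0$. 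Expanding twice the signed area of $P_AP_BP_C$,
\[
2S=(x_2y_3-x_3y_2)+y_1(x_3-x_2)+x_1(y_2-y_3),
\]
and the main point is that the first summand strictly dominates the other two in the $2$-adic valuation. Adding $\nu_2(x_2)>\nu_2(y_2)$ and $\nu_2(y_3)\geqslant\nu_2(x_3)$ gives $\nu_2(x_2y_3)>\nu_2(x_3y_2)$, so, since the two summands have distinct valuations, Property 3 gives $\nu_2(x_2y_3-x_3y_2)=\nu_2(x_3)+\nu_2(y_2)\leqslant0$. For $y_1(x_3-x_2)$ I split on whether $\nu_2(x_2)\geqslant\nu_2(x_3)$, so that $\nu_2(x_3-x_2)\geqslant\nu_2(x_3)$, or $\nu_2(x_2)<\nu_2(x_3)$, so that $\nu_2(x_3-x_2)=\nu_2(x_2)>\nu_2(y_2)$; either way, using $\nu_2(y_1)>0$ together with $\nu_2(y_2),\nu_2(x_3)\leqslant0$ one checks $\nu_2\bigl(y_1(x_3-x_2)\bigr)>\nu_2(x_3)+\nu_2(y_2)$. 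Symmetrically, splitting on whether $\nu_2(y_3)\geqslant\nu_2(y_2)$ and using $\nu_2(x_1)>0$ gives $\nu_2\bigl(x_1(y_2-y_3)\bigr)>\nu_2(x_3)+\nu_2(y_2)$. Applying Properties 2 and 3 to the three-term sum then yields $\nu_2(2S)=\nu_2(x_3)+\nu_2(y_2)\leqslant0$, hence $\nu_2(S)=\nu_2(2S)-1\leqslant-1<0$. Combined with the reduction and the implication that \textbf{P2} entails \textbf{P1}, this proves the lemma.

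The only genuine work is the bookkeeping in the case-by-case valuation estimates above: running the two two-way case splits for the mixed terms $y_1(x_3-x_2)$ and $x_1(y_2-y_3)$ and confirming each valuation strictly exceeds $\nu_2(x_3)+\nu_2(y_2)$, while watching for coordinates that vanish --- such a coordinate has valuation $\infty$, which can only help these inequalities. All of it is elementary given Properties 1--4 of $\nu_2$, but it is the one place where care is needed.
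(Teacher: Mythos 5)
Your proof is correct and follows essentially the same route as the paper: reduce to the base coloring $\pi$ via area-preservation, deduce \textbf{P1} from \textbf{P2}, and prove \textbf{P2} by showing one term of the area determinant has valuation exactly $\nu_2(x_3)+\nu_2(y_2)-1$ while the rest are strictly larger. The only difference is cosmetic --- you use the cofactor expansion with dominant term $x_3y_2$, whereas the paper groups the determinant as $(x_2-x_1)(y_3-y_1)-(y_2-y_1)(x_3-x_1)$ with dominant term $(y_2-y_1)(x_3-x_1)$, which has the same valuation.
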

 
 \begin{proof}

\textbf{P2} $\implies$ \textbf{P1}. \\ If there were three points of different colors on the same line, they would form a  triangle of area $0$.  Because $\nu_2(0)=\infty$, this is impossible.
\\
\textbf{P2}.

For a coloring $\pi^\mathcal{A}$ we need to prove that for any triangle $\triangle$, whose image under $\mathcal{A}$ has vertices of all three different colors, the following holds true:
$$\nu_2(Area(\triangle))<0.$$
Since $\mathcal{A}$ is area-preserving, it is enough to prove that $\mathcal{A}(\triangle)$ has  area with negative valuation. 
Suppose that the triangle $\mathcal{A}(\triangle) $ has vertex $(x_1,y_1)$ of color A, $(x_2,y_2)$ of color B and $(x_3,y_3)$ of color C. Then, its area is equal to 
$$Area(\triangle)=\frac{1}{2}\begin{vmatrix}
  x_1 & y_1 & 1 \\
  x_2 & y_2 & 1 \\
  x_3 & y_3 & 1 
 \end{vmatrix} = \frac{1}{2} \Big( (x_2-x_1)(y_3-y_1)-(y_2-y_1) (x_3-x_1)  \Big).$$
Application of the properties of valuation and the definition of the coloring leads to 
$$\nu_2\Big(  \frac{1}{2} (y_2-y_1)(x_3-x_1)   \Big)=-1+\nu_2(y_2-y_1)+\nu_2(x_3-x_1) =$$
$$=-1+\nu_2(y_2)+\nu_2(x_3)<-1+min\{\nu_2(x_2),0\}+min\{\nu_2(y_3),0\}\leq$$
$$\leq -1+ \nu_2(x_2-x_1)+\nu_2(y_3-y_1)=\nu_2\Big(  \frac{1}{2} (x_2-x_1)(y_3-y_1)   \Big).$$
Therefore, 
$$\nu_2(\mathcal{A}(\triangle))=\nu_2\Big(  \frac{1}{2} (y_2-y_1)(x_3-x_1)   \Big)=-1+\nu_2(y_2)+\nu_2(x_3)\leq-1+0+0=-1.$$
 \end{proof}

 \section{Degree of a broken line}
Given a tropical coloring $\pi^\mathcal{A}$, one can construct a degree map associated with it. It assigns an integer number to any oriented broken line.

  Let $K_n$ be a complete graph with $n$ vertices  considered as 1--dimensional simplicial complex. Suppose that we are given a 1--dimensional simplicial complex $K$ and a map $Col$ sending vertices of $K$ to vertices of $K_n$. Then, this map can be extended to a continuous map from complex $K$ to $K_n$ according to the following rules: 
 \begin{itemize}
\item Vertex $X$ is sent to   point $ Col(X)$.
\item Edge $X Y$ is sent to edge $Col(X)Col(Y)$ by a linear map determined by  its ends.
\end{itemize}
This map is, obviously, a continuous simplicial map from one simplicial complex to the other. We will use the same letter for both the original map (coloring) and the extended one.

For the following, let us fix a tropical coloring $\pi^\mathcal{A}$.
    
 \begin{figure}[hbtp]
\centering
\includegraphics[width=5in]{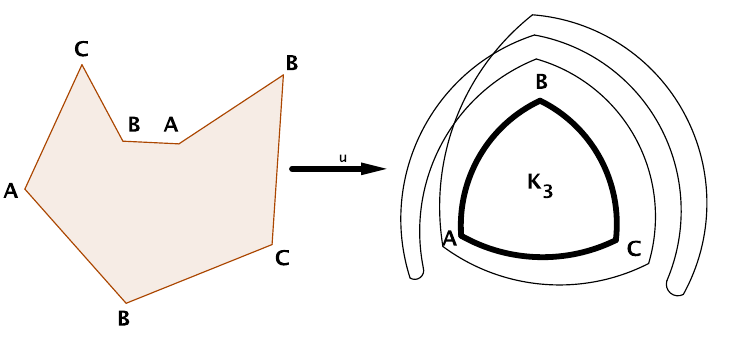}
\caption{Degree of a polygon is $-1$}
\label{class}
\end{figure} 

\begin{definition}
Any closed broken line  $ L= L_1 L_2 ... L_n$    has a natural structure of a simplicial complex. This complex is homeomorphic to a circle. Its vertices are 3--colored by $\pi^\mathcal{A}$. The extension of $\pi^\mathcal{A}$  gives a continuous map  from topological circle $L$ to topological circle $K_3$. We denote by $Deg(L,\pi^\mathcal{A})$ its topological degree.  See Fig. \ref{class}.
 \end{definition}

For a polygon $M$ we denote  its boundary by $\partial M$.
 
 \begin{lemma} \label{two}
If a polygon  $M$ can be cut into triangles, whose areas have non--negative 2--adic valuations, then for every tropical coloring $\pi^\mathcal{A}$  
$$Deg( \partial M,\pi^\mathcal{A})=0.$$
 \end{lemma}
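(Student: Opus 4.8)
The plan is to reduce the statement about the whole polygon $M$ to an additivity statement over the triangles of the dissection. Suppose $M = \triangle_1 \cup \dots \cup \triangle_N$ is a cut into triangles $\triangle_i$ whose areas have non-negative $2$-adic valuation. Orient each $\partial \triangle_i$ consistently with the orientation of $\partial M$ (say clockwise). The first key observation is that the degree map $L \mapsto Deg(L, \pi^\mathcal{A})$ is additive in the following sense: if an edge $e$ is shared by two triangles (or by a triangle and the "outside"), it appears with opposite orientations in the two boundary cycles containing it, and the contributions of oppositely-oriented copies of the same segment cancel. Concretely, I would pass to the formal sum of oriented edges: $\sum_i \partial \triangle_i = \partial M$ as $1$-chains, because every internal edge cancels. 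Since $Deg(\cdot, \pi^\mathcal{A})$ is really the image in $H_1(K_3) \cong \mathbb{Z}$ of the pushforward under the simplicial coloring map of the fundamental class, and pushforward is additive on chains, I get
$$Deg(\partial M, \pi^\mathcal{A}) = \sum_{i=1}^{N} Deg(\partial \triangle_i, \pi^\mathcal{A}).$$
Here I must be slightly careful: a single triangle $\triangle_i$ may have extra vertices of the dissection lying on its edges, so $\partial \triangle_i$ should be read as the broken line through all those subdivision points; subdividing an edge does not change the degree since the coloring map is defined by linear interpolation and a subdivided linear path is the same path.

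The second key step is to show that each summand $Deg(\partial \triangle_i, \pi^\mathcal{A})$ vanishes. By Lemma \ref{one} (property \textbf{P2}), a triangle whose area has non-negative valuation cannot have vertices of all three colors — if it did, its area would have negative valuation. Hence the vertices of $\triangle_i$ use at most two of the colors $A, B, C$. But then the coloring map sends the whole triangle boundary (a circle) into the union of at most two vertices of $K_3$ together with the single edge of $K_3$ joining them — a contractible subset, in fact an arc or a point. A continuous map from a circle into a contractible space is null-homotopic, so its degree is $0$. The same argument applies verbatim to $\triangle_i$ with its edges subdivided by extra dissection points, since adding vertices only along existing edges of $K_3$ keeps the image inside that arc. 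Therefore every term in the sum is zero, and $Deg(\partial M, \pi^\mathcal{A}) = 0$.

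The main obstacle, and the step needing the most care, is the additivity/cancellation argument — making precise that the boundary cycles of the triangles, when oriented coherently, add up to $\partial M$ in a way that is compatible with the topological-degree definition. The cleanest route is to phrase $Deg$ homologically: the simplicial coloring map induces $\pi^\mathcal{A}_* : H_1(\text{graph of the dissection}) \to H_1(K_3) \cong \mathbb{Z}$, the class $[\partial M]$ equals $\sum_i [\partial \triangle_i]$ in $H_1$ of the $1$-skeleton (each internal edge traversed once in each direction), and $Deg(L,\pi^\mathcal{A})$ is by definition the integer $\pi^\mathcal{A}_*[L]$. Then additivity is just linearity of $\pi^\mathcal{A}_*$. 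One should also double-check the degenerate case: a "triangle" may be a segment (zero area, valuation $\infty \geq 0$), but then its boundary cycle is a back-and-forth path, contributing $0$ to the degree, consistent with the claim. Assembling these pieces:
$$Deg(\partial M, \pi^\mathcal{A}) = \sum_{i=1}^{N} Deg(\partial \triangle_i, \pi^\mathcal{A}) = \sum_{i=1}^N 0 = 0,$$
which is exactly the assertion of Lemma \ref{two}.
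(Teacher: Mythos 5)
Your proposal is correct and follows essentially the same route as the paper: express $[\partial M]$ as the sum of the coherently oriented triangle boundary classes in $H_1$ of the dissection's $1$-skeleton, push forward to $H_1(K_3)\cong\mathbb{Z}$ by linearity, and kill each term because Lemma \ref{one} forces every triangle of non-negative area valuation to use at most two colors. Your extra care about subdivision points on triangle edges and about degenerate triangles addresses details the paper's proof passes over silently, but the underlying argument is the same.
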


 \begin{proof}
 
Suppose that polygon $M$ has a triangulation $\mathcal{T}$ with each triangle having non-negative valuation of area. This triangulation carries a natural structure of a 1--dimensional simplicial complex, induced from the plane, with vertices colored by $\pi^\mathcal{A}$. $\partial M$ is a subcomplex of $\mathcal{T}$ homeomorphic to a circle, so there is a class  $[\partial M] \in H_1(\mathcal{T},\mathbb{Z})$ corresponding to $\partial M$. The degree $Deg(\partial M,\pi^\mathcal{A})$ is equal to the image of the class $[\partial M]$ under the map 
$$H_1(\mathcal{T},\mathbb{Z}) \xrightarrow{\pi^\mathcal{A}}H_1(K_3,\mathbb{Z})\cong \mathbb{Z}. $$
  One can orient all triangles in the cut in a coherent way. Then the triangles sharing an edge will induce opposite orientations on this edge. Adding up the classes of all triangles' boundaries we obtain the  class of $\partial M$
$$[\partial M]=\sum_{\triangle \in  \mathcal{T}}[\partial \triangle]$$
and applying $\pi^\mathcal{A}$ we get:
$$\pi^\mathcal{A}([\partial M])=\sum_{\triangle \in  \mathcal{T}}\pi^\mathcal{A}([\partial \triangle]).$$
Since any $  \triangle \in  \mathcal{T}$  has non--negative valuation of area, at least two of its  vertices are of the same color according to Lemma \ref{one}. So, $\pi^\mathcal{A}([\partial \triangle])=0$ for any triangle in $\mathcal{T}$ and thus

$$\pi^\mathcal{A}([\partial M])=Deg(\partial M,\pi^\mathcal{A})=0. $$ 
 \end{proof}

 \section{Lattice Polygons} 
Points with integer coordinates in a plane form a two dimensional lattice in $\mathds{R}^2$, we will call it $\mathcal{L}$.  We call a polygon $M$  or a closed broken line $L$ \textit{ lattice} if all its vertices have integer coordinates. 
  
Let us denote by $K_4$ a simplicial complex with four vertices labeled by  elements of the group $\mathds{Z}_2 \times \mathds{Z}_2$ and edjes connecting each two of its vertices.
We can map $\mathcal{L}$ to $K_4$ by the map $$\overline{*}:(x_1,x_2) \longrightarrow \overline{  (x_1,x_2) }=(x_1 \, mod \, 2, x_2 \, mod \, 2).$$ 
For any lattice broken line $L$ we can consider its image under the map $\overline{*}$ using the construction from the previous section. This map induces a map on simplicial homology groups:
$$\mathbb{Z} \cong H_1( L,\mathbb{Z}) \xrightarrow{\overline{*}}H_1(K_4,\mathbb{Z}) \cong \mathbb{Z} \oplus \mathbb{Z} \oplus \mathbb{Z}.$$
The image of $1 \in \mathbb{Z}$ in the group $H_1(K_4,\mathbb{Z}) $ will be denoted by $\langle  L \rangle$ and called the \textit{class} of broken line  $ L$.

 \begin{lemma} \label{three}
If a lattice polygon M can be dissected into triangles, whose areas have non-negative 2--adic valuations, then the map $\overline{*}$ sends 
$H_1(\partial M,\mathbb{Z})$ to $0$.
 \end{lemma}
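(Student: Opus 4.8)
The plan is to reduce the statement about the mod-$2$ map $\overline{*}$ to the statement about tropical colorings already established in Lemma \ref{two}. The key observation is that the four-element target $K_4$, whose vertices are labelled by $\mathds{Z}_2 \times \mathds{Z}_2$, can be mapped onto the three-element complex $K_3$ in several ways: for each nonzero element $v \in \mathds{Z}_2 \times \mathds{Z}_2$ there is a simplicial surjection $p_v : K_4 \to K_3$ that collapses the edge joining the two vertices differing by $v$ (equivalently, $p_v$ identifies the pair of cosets of the subgroup $\{0,v\}$ and keeps the third pair split into two points). Running over the three nonzero $v$ we get three maps $p_v \circ \overline{*}$ from lattice broken lines to $K_3$.

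First I would check that each composite $p_v \circ \overline{*}$ is, up to an area-preserving affine change of coordinates with rational entries, one of the tropical colorings $\pi^{\mathcal{A}}$ — or more precisely, that it agrees with $\pi^{\mathcal{A}}$ on the lattice $\mathcal{L}$ for a suitable $\mathcal{A} \in \mathds{R}^2 \rtimes SL_2(\mathds{R})$. Indeed, the tropical coloring $\pi$ sends a lattice point $(x,y)$ to color $A$ iff both coordinates are even, i.e. iff $\overline{(x,y)} = (0,0)$; the remaining lattice points (those with $\overline{(x,y)} \neq (0,0)$) get colors $B$ or $C$ according to a linear condition mod $2$, and by composing with an element of $SL_2(\mathds{Z})$ one can arrange which of the three nonzero residue classes is singled out to share a color with the $(0,0)$ class, or rather which two of the nonzero classes are merged. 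So the three maps $p_v \circ \overline{*}$ are exactly realized by three tropical colorings restricted to $\mathcal{L}$.

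Next, apply Lemma \ref{two}: since $M$ is dissected into triangles of non-negative $2$-adic valuation of area, $Deg(\partial M, \pi^{\mathcal{A}}) = 0$ for every tropical coloring. Hence $(p_v \circ \overline{*})_*[\partial M] = 0$ in $H_1(K_3,\mathbb{Z}) \cong \mathbb{Z}$ for each of the three nonzero $v$. It remains to see that the three homomorphisms $(p_v)_* : H_1(K_4,\mathbb{Z}) \cong \mathbb{Z}^3 \to \mathbb{Z}$ are jointly injective — i.e. their common kernel is zero. This is a finite linear-algebra check: one picks an explicit basis of $H_1(K_4,\mathbb{Z})$ coming from three independent triangles (loops) in $K_4$, computes the image of each basis loop under each $p_v$, and verifies that the resulting $3 \times 3$ integer matrix is non-degenerate. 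Granting that, $\overline{*}_*[\partial M] = 0$, which since $\langle \partial M \rangle$ generates the image of $H_1(\partial M,\mathbb{Z})$ gives that $\overline{*}$ sends $H_1(\partial M,\mathbb{Z})$ to $0$, as claimed.

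The main obstacle I anticipate is the bookkeeping in the second step: one must be careful that $p_v \circ \overline{*}$ really coincides with a genuine tropical coloring $\pi^{\mathcal{A}}$ on all of $\mathcal{L}$ and not merely on residues — in particular that the affine map $\mathcal{A}$ can be taken in $SL_2(\mathds{Z})$ (or at worst with entries whose valuations do not disturb the color classes of lattice points), so that Lemma \ref{one}'s hypotheses transfer verbatim. Once the dictionary between the mod-$2$ picture and the tropical picture is pinned down, the degree vanishing and the final $3\times 3$ non-degeneracy computation are routine.
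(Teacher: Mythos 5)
Your plan is essentially the proof in the paper. The paper realizes your three composites $p_v\circ\overline{*}$ concretely as the tropical colorings $\pi^E,\pi^U,\pi^V$ for explicit $E,U,V\in\mathds{Z}^2\rtimes SL_2(\mathds{Z})$, verifies (as you propose) that $\pi^{\mathcal{A}}$ restricted to $\mathcal{L}$ factors through $\overline{*}$, applies Lemma \ref{two} to each, and then carries out your $3\times3$ linear algebra, which in the basis $\sigma_1,\sigma_2,\sigma_3$ comes out diagonal.

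One concrete warning about the step you deferred. For each nonzero $v\in\mathds{Z}_2\times\mathds{Z}_2$ there are \emph{two} edges of $K_4$ whose endpoints differ by $v$ (the two cosets of $\{0,v\}$), and which one you collapse matters; note also that your parenthetical ``identifies the pair of cosets'' would collapse both of them and produce a two-point quotient rather than $K_3$. If you collapse one edge for each of the three nonzero $v$ and the three collapsed edges form a triangle in $K_4$ --- e.g.\ $\{X_1,X_2\},\{X_2,X_3\},\{X_1,X_3\}$, which do realize the three distinct differences $(0,1),(1,1),(1,0)$ --- then the class $\sigma_1=X_1X_2+X_2X_3+X_3X_1$ lies in the common kernel of all three $(p_v)_*$, your $3\times3$ matrix is degenerate, and the argument fails. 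Since any choice of one edge per parallel class is either a triangle or a star, you must take the star configuration: the paper's $E,U,V$ merge the residue $(1,0)$ with each of the other three residues in turn. All six possible mergings are realizable by some $\pi^{\mathcal{A}}$, because $\mathds{Z}^2\rtimes SL_2(\mathds{Z})$ acts on the four residue classes as the full symmetric group, so the fix costs nothing --- but it has to be made, and your write-up as it stands does not guarantee it.
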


 \begin{proof}
Let us denote the points of $K_4$ in the following way:
$$X_1=(0,0); \;X_2=(0,1); X_3=(1,0);\;X_4=(1,1);  $$
The three cycles 
$$\sigma_1=X_1X_2+X_2X_3+X_3X_1,$$
$$\sigma_2=X_1X_3+X_3X_4+X_4X_1,$$
$$\sigma_3=X_3X_2+X_4X_3+X_2X_4$$
generate $H_1(K_4,\mathbb{Z})\cong \mathbb{Z} \oplus \mathbb{Z} \oplus \mathbb{Z}.$
Let us suppose that $\langle \partial M \rangle=\lambda_1 \sigma_1+\lambda_2 \sigma_2+\lambda_3 \sigma_3 \in H_1(K_4,\mathbb{Z}).$

For an integer number its 2--adic valuation is always nonnegative and it is equal to zero if and only if the number is odd. From the definition of the 2--adic valuation it is clear that for a point $(x,y) \in \mathcal{L}$ 
$$\overline{  (x,y) }=(0,0) \in \mathds{Z}_2 \times \mathds{Z}_2 \implies \pi\Big( (x,y) \Big)= A,$$
$$\overline{  (x,y) }=(0,1) \in \mathds{Z}_2 \times \mathds{Z}_2 \implies\pi\Big( (x,y) \Big)= B, $$
$$\overline{  (x,y) }=(1,0) \in \mathds{Z}_2 \times \mathds{Z}_2 \implies \pi\Big( (x,y) \Big)= C, $$
$$\overline{  (x,y) }=(1,1) \in \mathds{Z}_2 \times \mathds{Z}_2 \implies \pi\Big( (x,y) \Big)= C. $$

This means that the map $\pi$ is correctly defined on $K_4 \cong \mathds{Z}_2 \times \mathds{Z}_2$ and $\pi\Big( (x,y) \Big)=\pi\Big( \overline{  (x,y) } \Big)$. Any area-preserving affine transformation $\mathcal{A}\in \mathds{Z}^2 \rtimes SL_2(\mathds{Z}) \subseteq \mathds{R}^2 \rtimes SL_2(\mathds{R}) $ acts on the four vertices of $K_4 \cong \mathds{Z}_2 \times \mathds{Z}_2$ by permutation, a simple check shows that 
$\pi^\mathcal{A}\Big( (x,y) \Big)=\pi\Big( \mathcal{A}\overline{  (x,y) } \Big)$.

We will apply Lemma \ref{two} to the three colorings corresponding to the following area--preserving affine transformations:

$E:   \; (x,y) \longrightarrow (x,y),$

$U:   \; (x,y) \longrightarrow (x+y,y),$

$V:   \; (x,y) \longrightarrow (y+1,x).$

By Lemma \ref{two},  $Deg(\partial M,\pi^E)=0$.
$$0=\pi^E(\langle \partial M \rangle)=\lambda_1 \pi^E(\sigma_1)+\lambda_2 \pi^E(\sigma_2)+\lambda_3 \pi^E(\sigma_3)=$$
$$=\lambda_1 (AB+BC+CA)+\lambda_2 (AC+CC+CA)+\lambda_3 \pi^E(CB+CC+BC)=$$
$$=\lambda_1 (AB+BC+CA).$$
So, $\lambda_1=0$. Similarly, applying the same procedure to affine transformations $U$ and $V$, we get $\lambda_2=0$ and $\lambda_3=0$. 

 \end{proof}

 \section{Balanced Polygons} 
For two vectors $v=(v_x,v_y)$ and $w=(w_x,w_y)$ their \textit{wedge product} is defined as the oriented area of the parallelogram formed by these vectors. It can be calculated as the determinant:
$$v \wedge w=\begin{vmatrix}
  v_x & w_x  \\
  v_y & w_y   
 \end{vmatrix} =v_x w_y-v_y w_x.$$

\begin{definition}
For any closed broken line  $L= L_1 L_2 ... L_n$  we define its generalised area by
$$Area(L)=\frac{1}{2}\sum_{i=1}^{n}\overline{OL_i}\wedge \overline{OL_{i+1}} , \text{ where  } L_{n+1}:=L_1.$$
 \end{definition}

For a non--selfintersecting broken line the notion defined above gives  the oriented area of a polygon, which is bounded by the broken line.

\begin{lemma} \label{three+}
For a lattice parallelogram $P$ the following is true:
\begin{itemize}
\item If area of $P$ is even, then $\langle \partial P \rangle=0.$
\item If area of $P$ is odd, then $\langle \partial P \rangle \in \{\pm(\sigma_2+\sigma_3), \pm(\sigma_3+\sigma_1), \pm(\sigma_1+\sigma_2 )\}.$
\end{itemize}
 \end{lemma}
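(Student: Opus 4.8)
The plan is to reduce the statement to a short computation by exploiting the action of the affine group $\mathds{Z}^2 \rtimes SL_2(\mathds{Z})$ on lattice parallelograms, together with the homology-class formalism $\langle \cdot \rangle$ already introduced. First I would observe that $\langle \partial P \rangle$ depends only on the residues mod $2$ of the vertices of $P$, and that translating $P$ by a lattice vector and applying an $SL_2(\mathds{Z})$ transformation permutes the four labels $X_1,\dots,X_4$ of $K_4$, hence permutes the generators $\sigma_1,\sigma_2,\sigma_3$ of $H_1(K_4,\mathds{Z})$ (up to sign) in a controlled way. So, after an area-preserving affine transformation, I may assume one vertex of $P$ is the origin, and $P$ is spanned by two primitive-ish vectors $v,w$ with $v \wedge w = \pm\,\mathrm{Area}(P)$; in fact using the Smith normal form / Hermite normal form for the $2\times 2$ integer matrix with columns $v,w$, I can bring $P$ to the standard form spanned by $(1,0)$ and $(0,d)$ where $d = |\mathrm{Area}(P)|$. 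This is the key simplification.

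Next I would compute $\langle \partial P \rangle$ directly for this normal-form parallelogram with vertices $O=(0,0)$, $(1,0)$, $(1,d)$, $(0,d)$. The image under $\overline{*}$ depends only on $d \bmod 2$. If $d$ is even, all four vertices reduce to either $(0,0)$ or $(1,0)$, i.e.\ lie among $\{X_1,X_3\}$, so the boundary loop in $K_4$ traverses only the single edge $X_1X_3$ back and forth and is null-homologous; hence $\langle \partial P\rangle = 0$, giving the first bullet. If $d$ is odd, the vertices in cyclic order reduce to $X_1, X_3, X_4, X_2$, and the boundary maps to the cycle $X_1X_3 + X_3X_4 + X_4X_2 + X_2X_1$ in $K_4$; expressing this in the basis $\sigma_1,\sigma_2,\sigma_3$ (recalling $\sigma_1 = X_1X_2+X_2X_3+X_3X_1$, $\sigma_2 = X_1X_3+X_3X_4+X_4X_1$, $\sigma_3 = X_3X_2+X_4X_3+X_2X_4$, and using $X_iX_j = -X_jX_i$) is a finite linear-algebra check which I expect to yield $\pm(\sigma_2 + \sigma_3)$ or a similar pair — concretely one finds the class is a sum of exactly two of the $\sigma_i$ (with a common sign), matching the second bullet.

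Finally I would transfer back: the general lattice parallelogram $P$ is obtained from the normal form by an element of $\mathds{Z}^2 \rtimes SL_2(\mathds{Z})$, and such an element acts on $H_1(K_4,\mathds{Z})$ by permuting $\{\sigma_1,\sigma_2,\sigma_3\}$ up to a global sign (this follows because the induced permutation of the four vertices $X_i$ is realized by an affine-linear bijection of $\mathds{Z}_2\times\mathds{Z}_2$, and one checks that the three $\sigma_i$ form a single orbit up to sign under $A_4 \cong SL_2(\mathds{F}_2)\ltimes(\mathds{Z}_2\times\mathds{Z}_2)$-type action). Since the set $\{\pm(\sigma_2+\sigma_3), \pm(\sigma_3+\sigma_1), \pm(\sigma_1+\sigma_2)\}$ is stable under all such permutations and sign changes, and $0$ is obviously fixed, the conclusion for the normal form propagates to every lattice parallelogram of the same area parity. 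Area parity is itself invariant under $\mathds{Z}^2 \rtimes SL_2(\mathds{Z})$, so the two cases are genuinely the two cases.

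The main obstacle I anticipate is not the homology computation in the normal-form case (which is a short explicit calculation) but the bookkeeping in the reduction step: verifying carefully that every element of $\mathds{Z}^2 \rtimes SL_2(\mathds{Z})$ really does act on the generators $\sigma_i$ merely by a signed permutation, so that the target set in the odd case is preserved. One must be slightly careful because $\overline{*}$ is only equivariant for the affine transformations after passing to residues, and the translation part can swap which vertex is the "origin"; but since $\langle \partial P\rangle$ is manifestly translation-invariant (it is a homology class of a loop, and translating the loop in the plane does not change the combinatorial type of its image once we track residues), this is harmless. I would also double-check the degenerate edge cases — e.g.\ a parallelogram with a vertex on a coordinate axis reducing to a non-generic residue pattern — but the Hermite normal form reduction handles all of them uniformly.
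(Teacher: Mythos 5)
Your proposed reduction to a normal form is broken as stated, and both claims on which the transfer-back step rests are false. (1) A lattice parallelogram spanned by $v,w$ cannot in general be brought to the form spanned by $(1,0)$ and $(0,d)$ by an element of $\mathds{Z}^2\rtimes SL_2(\mathds{Z})$: only left multiplication of the matrix $(v\,|\,w)$ by a unimodular matrix is available (right multiplication replaces the edge vectors by integer combinations of themselves and yields a \emph{different} parallelogram), so you can reach an upper triangular Hermite form but not the Smith form, and even the Smith form is $\mathrm{diag}(d_1,d_2)$ with $d_1\mid d_2$ rather than $\mathrm{diag}(1,d)$. Concretely, $\mathds{Z}^2/(\mathds{Z}v+\mathds{Z}w)$ is an invariant of your transformations and it distinguishes the parallelogram on $(3,0),(0,3)$ from the one on $(1,0),(0,9)$, although both have area $9$. (2) $\langle\partial P\rangle$ is \emph{not} translation-invariant: translating the unit square by $(1,0)$ turns the vertex cycle $X_1X_3X_4X_2$ into $X_3X_1X_2X_4$, the same $4$-cycle traversed backwards, so the class changes sign. (3) The affine group does not act on $\{\sigma_1,\sigma_2,\sigma_3\}$ by signed permutations: the $3$-cycle $X_1\to X_2\to X_3\to X_1$, $X_4\mapsto X_4$ (realized by $x\mapsto Mx+t$ with $M\equiv\bigl(\begin{smallmatrix}0&1\\1&1\end{smallmatrix}\bigr)$, $t=(0,1)$ modulo $2$) sends $\sigma_2=X_1X_3+X_3X_4+X_4X_1$ to $X_2X_1+X_1X_4+X_4X_2=-(\sigma_1+\sigma_2+\sigma_3)$, which is not $\pm\sigma_j$ for any $j$.

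What saves your target set — and what makes the whole normal-form detour unnecessary — is that $\pm(\sigma_2+\sigma_3),\pm(\sigma_3+\sigma_1),\pm(\sigma_1+\sigma_2)$ are precisely the classes of the three Hamiltonian $4$-cycles of $K_4$ with their two orientations (e.g.\ $\sigma_1+\sigma_2=X_1X_2+X_2X_3+X_3X_4+X_4X_1$), a set preserved by \emph{every} permutation of the vertices; but that is a different argument from the one you give, and once you have it you no longer need any group action. The paper argues directly: the residues of the four vertices are $\bar p,\ \bar p+\bar v,\ \bar p+\bar v+\bar w,\ \bar p+\bar w$, and the area $v\wedge w$ is odd exactly when $\bar v,\bar w$ are independent over $\mathds{Z}_2$, in which case the four residues exhaust the vertices of $K_4$ and $\partial P$ maps onto a Hamiltonian $4$-cycle, i.e.\ onto one of $\pm(\sigma_i+\sigma_j)$; when the area is even the residues degenerate into at most two values traversed back and forth, so the class vanishes (the paper instead gets the even case by cutting $P$ into two integer-area triangles and invoking Lemma~\ref{two} via Lemma~\ref{three}). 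If you insist on a normal form, the Hermite form with odd diagonal entries does make the odd-case computation go through, but verifying that amounts to redoing the paper's two-line residue argument.
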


 \begin{proof}
The parallelogram can be cut into two equal triangles of the integer area.  The application of Lemma \ref{two} gives the first statement.
 
To prove the second statement, we will show that if the parallelogram $P$ has  odd area, then all its pairs of coordinates of vertices are different modulo $2$.   If the vertices of the parallelogram have coordinates $(x_1,y_1), (x_2,y_2)$, $(x_3,y_3), (x_4,y_4)$, then its area is equal to
$$Area(P)= (x_2-x_1)(y_3-y_1)-(y_2-y_1) (x_3-x_1) =$$
$$= (x_4-x_1)(y_3-y_1)-(y_4-y_1) (x_3-x_1).$$
From this formula, it is clear that if there are two vertices with both $x$ and $y$ coordinates being conjugate modulo $2$, then $Area(P)$ is even.

So, vertices of the parallelogram are colored in colors A, B, C, C. Depending on the order in which these colors follow each other we obtain one of the cycles $\pm (\sigma_2+\sigma_3), $ $\pm(\sigma_3+\sigma_1), $ $\pm(\sigma_1+\sigma_2)$.
 \end{proof}

The following Lemma generalizes Lemma \ref{three+}

\begin{lemma} \label{four}
If $B$ is a balanced lattice polygon, then the image of its boundary under the map $\overline{*}$ is representing a class $\langle \partial B \rangle$ in the group
$H_1(K_4,\mathbb{Z}) \cong \mathbb{Z} \oplus \mathbb{Z} \oplus \mathbb{Z}$, which is lying in a subgroup of index $2$ generated by 
$\sigma_2+\sigma_3, \sigma_3+\sigma_1 \text{ and } \sigma_1+\sigma_2$:
$$\langle \partial B \rangle=\mu_1(\sigma_2+\sigma_3)+\mu_2(\sigma_3+\sigma_1)+\mu_3(\sigma_1+\sigma_2)$$
for some $\mu_1,\mu_2, \mu_3 \in \mathbb{Z}$.\\Furthermore, $$Area(B) \equiv \mu_1+\mu_2+\mu_3 \text{ (mod 2)}.$$
 \end{lemma}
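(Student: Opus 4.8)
\textbf{Proof proposal for Lemma \ref{four}.}

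The plan is to reduce the statement about an arbitrary balanced lattice polygon $B$ to the parallelogram case already handled in Lemma \ref{three+}. By definition of "balanced", the edges of $\partial B$ come in pairs $(e_i, e_i')$ where $e_i$ and $e_i'$ are parallel, of equal length, and oppositely oriented; say there are $k$ such pairs, so $\partial B$ has $2k$ edges. For each pair, the two edges are translates of each other: $e_i'$ is obtained from $e_i$ by a translation vector $w_i$, and $e_i$ is a lattice vector $v_i$. The key combinatorial observation is that the closed broken line $\partial B$ is homologous, in a suitable sense, to a formal sum of $k$ lattice parallelograms built from the vectors $v_i$ and the translation vectors: traversing $e_i$, then "jumping" to $e_i'$, then traversing $e_i'$ backwards, and jumping back, bounds a parallelogram (possibly degenerate) with side vectors $v_i$ and $w_i$. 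Concretely, I would introduce, for each edge-pair, the parallelogram $P_i$ spanned at some chosen vertex by $v_i$ and the connecting vector; then I claim
$$\langle \partial B \rangle = \sum_{i=1}^{k} \pm \langle \partial P_i \rangle \in H_1(K_4,\mathbb{Z}).$$

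To make this precise I would work in the group of $1$-chains on $K_4$ (or directly with the homology class, using that $\overline{*}$ is a simplicial map). The cleanest route is to pick a base point $O$ and, as in the definition of generalized area, to "cone off" $\partial B$: the class of a closed lattice broken line under $\overline{*}$ only depends on the multiset of edge vectors together with their basepoints modulo $2$, because inserting or deleting a triangle $O L_i L_{i+1}$ whose vertices are not all distinct mod $2$ does not change the class (this is exactly the mechanism of Lemma \ref{two}/\ref{three}), and a triangle whose vertices are all distinct mod $2$ contributes one of $\pm(\sigma_1+\sigma_2+\sigma_3)$-type cycles — but we must track those carefully. I would instead telescope: write $\langle \partial B \rangle$ as the sum over edge-pairs of the contribution of the "elementary loop" $e_i \cdot (\text{connector}) \cdot \overline{e_i'} \cdot (\text{connector back})$, each of which is the boundary of a lattice parallelogram $P_i$ with area $v_i \wedge w_i$. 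Summing, the connectors cancel telescopically and we recover $\partial B$. Then Lemma \ref{three+} applies to each $P_i$: if $\mathrm{Area}(P_i)$ is even then $\langle\partial P_i\rangle = 0$, and if odd then $\langle\partial P_i\rangle$ is $\pm$ one of $\sigma_2+\sigma_3$, $\sigma_3+\sigma_1$, $\sigma_1+\sigma_2$. Hence $\langle \partial B\rangle$ is an integer combination of those three elements, which is precisely the claimed index-$2$ subgroup (index $2$ because $\sigma_2+\sigma_3, \sigma_3+\sigma_1, \sigma_1+\sigma_2$ have determinant $\pm 2$ relative to the basis $\sigma_1,\sigma_2,\sigma_3$).

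For the congruence $\mathrm{Area}(B) \equiv \mu_1 + \mu_2 + \mu_3 \pmod 2$, I would use additivity of the generalized area. With the same telescoping decomposition, $\mathrm{Area}(\partial B) = \sum_i \mathrm{Area}(P_i) = \sum_i v_i \wedge w_i$ up to the vanishing connector terms; I should double-check that the coning/telescoping respects the generalized-area functional exactly (it does, since generalized area is itself defined by coning at $O$ and is additive under concatenation of broken lines). On the other hand, from the previous paragraph each $P_i$ with odd area contributes $\pm$ a single $\sigma_a + \sigma_b$ to $\langle \partial B\rangle$, while even-area $P_i$ contribute $0$; modulo $2$ the signs are irrelevant, so $\mu_1 + \mu_2 + \mu_3 \equiv \#\{i : \mathrm{Area}(P_i) \text{ odd}\} \equiv \sum_i \mathrm{Area}(P_i) = \mathrm{Area}(B) \pmod 2$.

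The main obstacle I anticipate is making the telescoping decomposition rigorous at the level of homology classes of $K_4$ rather than just "morally": one has to be careful that a single edge of $\partial B$, appearing with both orientations in the two members of its pair, really does combine with the chosen connectors to form a genuine parallelogram cycle, and that the connectors inserted between consecutive elementary loops cancel in pairs without leaving behind stray cycles that are all-distinct-mod-$2$ triangles (which would spoil membership in the index-$2$ subgroup). The balanced condition is exactly what guarantees this cancellation, so the bookkeeping of which vertex of $K_4$ each connector endpoint maps to — and verifying the contributions telescope in $H_1(K_4,\mathbb{Z})$, not merely in the free group on edges — is the delicate point. A clean way to sidestep part of this is to note that $H_1(K_4,\mathbb{Z})$ injects into $H_1$ of the $1$-skeleton computations via the three "$\sigma$-coordinates", and to verify the telescoping coordinate-by-coordinate.
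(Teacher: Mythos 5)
Your overall strategy---reduce $\langle \partial B\rangle$ and $Area(B)$ to a sum over lattice parallelograms and then invoke Lemma \ref{three+}---is the same as the paper's, but the specific decomposition you propose (one parallelogram per edge-pair, with ``telescoping connectors'') is false, and the obstacle you flag at the end is fatal rather than merely delicate. Take $B$ to be the unit square with edges $e_1,e_2,e_3,e_4$ in cyclic order, paired as $\{e_1,e_3\}$ and $\{e_2,e_4\}$. The elementary loop for the pair $\{e_1,e_3\}$ is forced: the connector from the endpoint of $e_1$ to the start of $e_3$ is exactly $e_2$, and the connector back is $e_4$, so that loop is all of $\partial B$; likewise for $\{e_2,e_4\}$. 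Hence $\sum_i \partial P_i = 2\,\partial B$ and $\sum_i Area(P_i)=2\neq 1=Area(B)$: the connectors do not cancel, they reassemble into a second copy of $\partial B$. In general the residual cycle $\sum_i(c_i+c_i')$ formed by the connectors is itself a balanced lattice cycle (each $c_i'$ is a parallel, equal, oppositely oriented mate of $c_i$), so ``the balanced condition guarantees the cancellation'' is not correct---it only guarantees that the leftover is another instance of the very statement you are trying to prove. Consequently neither the membership claim nor the congruence $Area(B)\equiv\mu_1+\mu_2+\mu_3 \pmod 2$ follows from your decomposition as written.

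The paper repairs exactly this point with different bookkeeping: it lets $S_n$ act on closed broken lines by permuting the order of the edge vectors, and checks that an adjacent transposition changes $Area(L)$ by $-\,v_{i+1}\wedge v_i$ and $\langle L\rangle$ by $-\langle\partial P\rangle$ for a single explicit lattice parallelogram $P$. By Lemma \ref{three+} each such step preserves the property ``$\langle L\rangle$ lies in the index-$2$ subgroup and $Area(L)\equiv\mu_1+\mu_2+\mu_3$''; sorting the edges so that each edge is immediately followed by its partner yields a broken line with area $0$ and class $0$, and undoing the sort proves the lemma. This uses one parallelogram per transposition---generally far more than $k$ of them---which is precisely what your one-parallelogram-per-pair count misses. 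If you want to keep your viewpoint, it can be salvaged as an induction on $k$: delete one pair $(e_i,e_i')$ and translate the arc between them by $v_i$ to close up; the difference from $\partial B$ is then a genuinely telescoping sum of parallelograms, one for each edge of that arc, which is equivalent to the paper's transposition argument.
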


 \begin{proof}

Parallelograms are basic examples of balanced polygons and we have seen that Lemma \ref{four} holds true for them. Now, we are going to show that any balanced polygon is built from parallelograms in some sense. For this we need to describe an action of  group $S_{n}$ on the set of broken lines.

For a  broken line $L= L_1 L_2 ... L_n$ ,  let us  denote by $v_i=\overline{L_i L_{i+1}} $ the side vector of $L$ (here  $L_{n+1}:=L_1$). Any $\sigma \in S_{n}$ acts on the set of broken lines  according to the rule:  $\sigma(L)=M, \text{ where M is a broken line }  M_1 M_2 ... M_n  $ with $M_1=L_1$, and for each $i$
 $$   M_{i+1}= L_1+v_{\sigma^{-1}(1)}+v_{\sigma^{-1}(2)}+...+ v_{\sigma^{-1}(i)}.$$
This action sends balanced broken lines to balanced and lattice to lattice.

Let $\tau_i$ denote a transposition $(i ,i+1)$. It is well known that a set $\{ \tau_i \mid 1\leq i \leq n-1\}$ generates $S_{n}$.
One can check that
$$Area(\tau_j (L))=Area (L) - v_{i+1} \wedge v_i $$
and 
$$\langle \tau_j (L) \rangle=\langle L \rangle -\langle P \rangle.$$
Here P is a parallelogram $L_i L_{i+1} L_{i+2} X$ , with $X= L_{i}+v_{i+1}.$ These properties guarantee that broken lines $L$ and $\tau_i(L)$ satisfy the conditions of Lemma \ref{four}  simultaneously.

Since lattice polygon $B$ is balanced, number $n$ of its vertices is even and sides of $B$ can be indexed by numbers $\alpha_1, \alpha_2,... ,\alpha_k$ and $\beta_1, \beta_2,... ,\beta_k$ so that $n=2k$ and sides with indices $\alpha_i$ and $\beta_i$ are parallel, equal in length and inherit opposite orientations from the polygons' boundary. Numbers $\alpha_i$ and $\beta_i$ are just natural  numbers from $1$ to $n=2k$, so one can consider permutation
$$\sigma=
\begin{pmatrix}
  \alpha_1 & \beta_1 & \alpha_2 & \beta_2 &...&\alpha_k & \beta_k \\
  1 & 2 & 3 & 4 & ...& 2k-1 & 2k
 \end{pmatrix}.$$

In the broken line $\sigma(\partial B)$ after any side with an odd number goes the side parallel and equal to it and having the opposite direction. Both the area and the class of $\sigma(\partial B)$ in $H_1(K_4,\mathbb{Z})$  is equal to $0$. Since $\sigma$ can be presented as a product of transpositions $\tau_i$,  $\partial B$ satisfies the conditions of Lemma \ref{four}. 
 \end{proof}
Now we are ready to finish the proof of our main result. 

\begin{proof}[Proof of non-equidissectibility of a balanced lattice polygon]
Suppose that for a balanced lattice polygon $B$ of integer odd area there exist a cut into an odd number of triangles of equal areas. If $Area(B)=S$ and the number of triangles is equal to $N$, then the area of each of them is $\frac{S}{N}$. Since $S$ and $N$ are odd numbers,
$$\nu_2(\frac{S}{N})=\nu_2(S)-\nu_2(N)=0.$$
According to Lemma \ref{three}, the class of the broken line $\partial B$ in $H_1(K_4,\mathbb{Z})\cong \mathbb{Z} \oplus \mathbb{Z} \oplus \mathbb{Z}$ is equal to $0$, and according to Lemma \ref{four} there exists $\mu_1,\mu_2, \mu_3 \in \mathbb{Z}$ for which 
$$\langle \partial B \rangle=\mu_1(\sigma_2+\sigma_3)+\mu_2(\sigma_3+\sigma_1)+\mu_3(\sigma_1+\sigma_2)=0.$$
Therefore, $\mu_1=\mu_2=\mu_3=0$ and $Area(B)=S\equiv \mu_1+\mu_2+\mu_3 \equiv 0 \text{ (mod 2)}$. 
This contradicts the oddness of $S$. 
\end{proof}

\section{Appendix: Connections with Tropical Geometry}
In the following section no new results are obtained, so the style will be rather informal.

It is more natural to define tropical colorings on  $\mathds{R}\mathds{P} ^2$ --- the real projective plane. It is well-known that a point of $\mathds{R}\mathds{P} ^2$ is defined by its homogenious coordinates --- a triple of real numbers $[x:y:z]$ with not all $x, y, z$ equal to $0$. For any nonzero $\lambda$ triples $[x:y:z]$ and $[ \lambda x:\lambda y:\lambda z]$ define the same point.
One can define a \textbf{momentum map} from the projective plane  to a triangle $T$ in the plane with vertices (1,0),(0,1) and (0,0):
$$m:\mathds{R}\mathds{P} ^2 \longrightarrow T$$
defined by the formula
$$m([x:y:z])=\frac{2^{-\nu_2(x)} \Big( 1,0\Big)  + 2^{-\nu_2(y)} \Big( 0,1\Big)+ 2^{-\nu_2(z)} \Big( 0,0\Big)}{2^{-\nu_2(x)}+2^{-\nu_2(y)}+2^{-\nu_2(z)}}.$$

\begin{figure}[hbtp]
\centering
\includegraphics[width=2.5in]{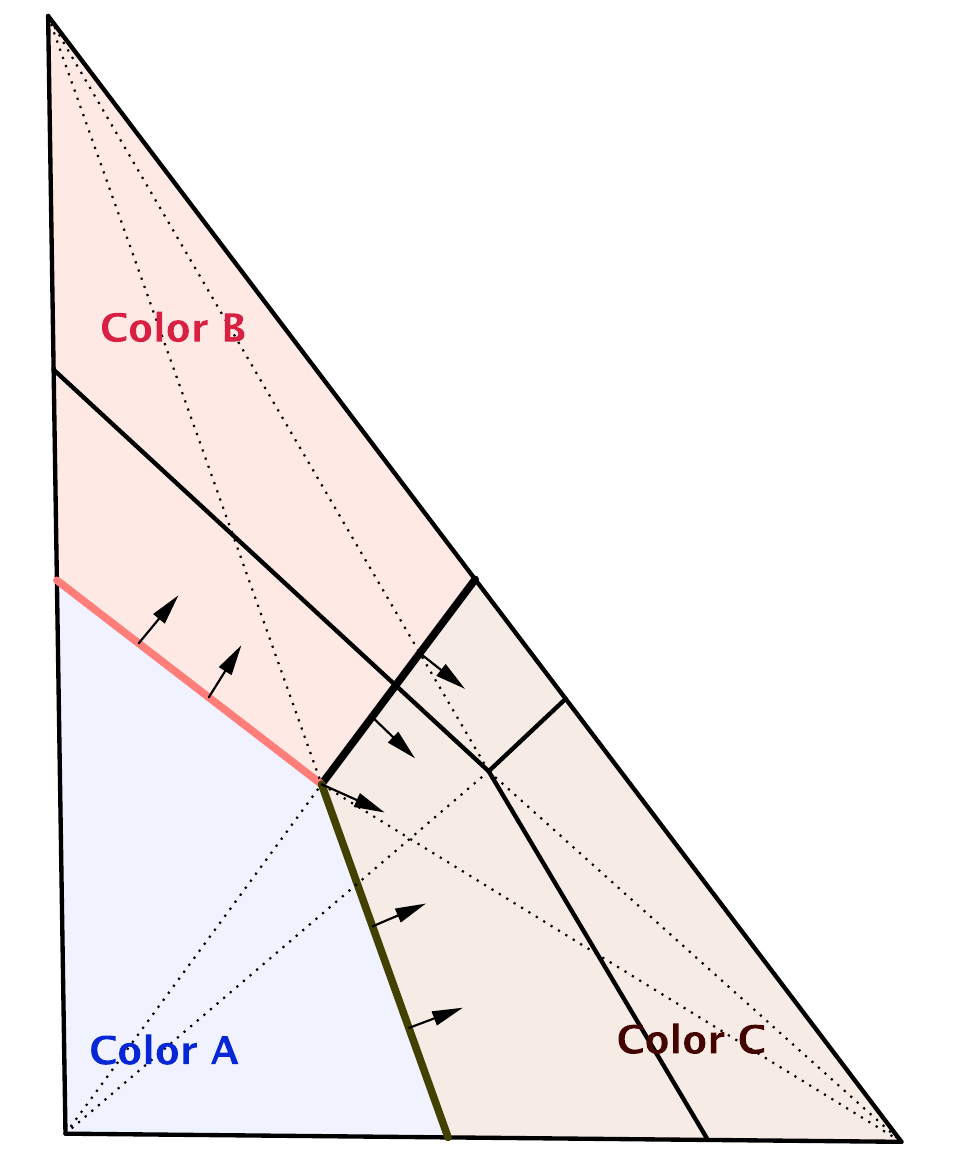}
\caption{Tropical coloring of the plane and the image of the line}
\label{Tropical coloring}
\end{figure}

One can check that the image of any line in $\mathds{R}\mathds{P} ^2$ under the momentum map is a union of three segments sharing a common end. For each segment the remaining end is lying on a side of triangle $T$ and the whole segment is contained within a line, passing through the vertex of $T$ opposite to the side.

The image of the line $x+y+z=0$ cuts T into three pieces, whose points we color in three colors A, B and C. Now we can color each point in $\mathds{R}\mathds{P} ^2$ in the color of its image under the momentum map. This coloring is the same as  coloring $\pi$ constructed in the beginning of the paper if considered on the affine chart of $\mathds{R}\mathds{P} ^2$  with $z=1$.

Property \textbf{P1} is obvious now --- one can see that the image of any other line under the momentum map can intersect only two parts in which the image of the line $x+y+z=0$ cuts triangle T. 

In article \cite{HS} by A. Hales and E. Straus  colorings of $\mathds{R}\mathds{P} ^2$  are studied in more detail. One of their results is the following theorem:

\begin{theorem} [A. Hales, E. Straus, 1982 ] \label{HS}  
Let $C$ be a set of algebraic curves in $\mathds{R}^2$ having the same Newton polygon $P$ with $n$ integer points inside ($C$ is a $n$--dimensional linear system of algebraic curves). Then there exists a coloring of $\mathds{R}^2$ in $n + 1$ colors such that no curve in $C$ contains all $n + 1$ colors and no color is confined within a curve in $C$.\footnote{Actually, the result obtained in \cite{HS}  is  stronger: it holds for colorings of a projective plane over any field which has a nontrivial non--Archimedean valuation, and for arbitrary  $n$--dimensional linear systems of algebraic curves without based points.}
\end{theorem}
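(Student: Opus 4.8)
The plan is to prove Theorem~\ref{HS} by reducing it to the construction of a single $(n+1)$--coloring on the projective plane that is transverse to the given linear system $C$, and then to build that coloring from a momentum map exactly as in the affine case treated above. First I would fix a 2--adic valuation $\nu$ on $\mathds{R}$ (or, in the stronger version, any nontrivial non--Archimedean valuation on the ground field) and a generic choice of $n+2$ monomials among those supported on $P$; actually, the cleanest route is to use \emph{all} the lattice points of $P$ on the boundary together with the $n$ interior ones, so that a curve in $C$ is a vanishing locus of a generic linear combination of these $|P \cap \mathds{Z}^2|$ monomials. The curves in $C$ then live on a projective toric surface $X_P$ attached to $P$, embedded in $\mathds{P}^{N}$ with $N+1 = |P\cap\mathds{Z}^2|$, and a member of $C$ is the intersection of $X_P$ with a hyperplane from the $n$--dimensional linear subsystem cut out by requiring the extra boundary coordinates to vanish in a coherent way. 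The essential point is that an $n$--dimensional linear system on $X_P$ with no base points is the pullback of the full system of hyperplanes under a map $\phi : X_P \to \mathds{P}^n$, so it suffices to color $\mathds{P}^n$ in $n+1$ colors so that no hyperplane contains all $n+1$ colors and no color lies in a hyperplane, and then pull the coloring back along $\phi$.

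The coloring of $\mathds{P}^n$ is the higher--dimensional analogue of the momentum picture from the appendix. I would define a momentum map $m : \mathds{RP}^n \to \Delta^n$ onto the standard simplex by
$$m([x_0:\dots:x_n]) = \frac{\sum_{i=0}^n 2^{-\nu(x_i)} e_i}{\sum_{i=0}^n 2^{-\nu(x_i)}},$$
where $e_0,\dots,e_n$ are the vertices of $\Delta^n$. As in the planar case one checks that the image of any hyperplane $\sum a_i x_i = 0$ under $m$ is a tropical hyperplane: the union of the $n+1$ facets of a ``dual'' fan translate, so it misses the relative interior of at least one of the $n+1$ sub--regions into which a fixed generic tropical hyperplane (say the image of $\sum x_i = 0$, suitably perturbed by a generic translation to avoid coincidences with the finitely many... actually infinitely many, so one perturbs by a generic torus element) divides $\Delta^n$. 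Coloring each of those $n+1$ regions a distinct color and pulling back to $\mathds{RP}^n$ gives the desired coloring: no hyperplane's image meets all $n+1$ regions, hence no curve in $C$ hits all $n+1$ colors; and each color is an open full--dimensional region whose preimage is Zariski--dense, hence not contained in any hyperplane, and therefore (after pulling back by $\phi$) not contained in any curve of $C$.

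The routine verifications are the two combinatorial facts about tropical hyperplanes: that the $m$--image of a hyperplane is contained in the $(n-1)$--skeleton of a translated normal fan, and that two such tropical hyperplanes in $\Delta^n$ cannot have one ``cover'' the region decomposition of the other, so that a generic translate of the reference hyperplane is genuinely transverse to every member of the (infinite) family $C$. The honest obstacle, and the step I expect to be the crux, is the \emph{no base points} hypothesis: I must argue that the linear system, being base--point free, defines an everywhere--regular morphism $\phi : X_P \to \mathds{P}^n$ (rather than just a rational map), so that pulling back a coloring of $\mathds{P}^n$ along $\phi$ produces a genuine coloring of all of $X_P \supseteq \mathds{R}^2$ with the two required properties inherited intact; without base--point freeness the pullback coloring would be undefined on the base locus and a curve could sneak through it carrying a missing color. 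Handling the toric/linear--system geometry carefully — in particular verifying that the full system of the $n$ interior monomials on $\mathds{R}^2$ really is the restriction of a base--point--free system and that ``no color confined to a curve'' survives the pullback — is where the real work lies; the tropical combinatorics and the momentum--map estimates are direct generalizations of Lemma~\ref{one}.
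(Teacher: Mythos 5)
This statement is quoted from Hales and Straus \cite{HS}; the paper itself offers no proof of it, and the appendix in which it appears is explicitly informal (``no new results are obtained''). Your proposal reproduces essentially the same tropical/momentum-map picture that the appendix sketches, but the two steps you defer --- the ``routine verifications'' about how tropical images of members of $C$ interact, and ``the honest obstacle'' of extending the coloring beyond the torus --- are exactly the points the paper names as the reason this picture does \emph{not} amount to a proof: the combinatorial difficulty of showing that one fixed reference tropical hypersurface cuts out $n+1$ regions each of which is avoided by the image of \emph{every} member of the infinite family $C$ (not merely a generic one), and the algebraic difficulty of extending the coloring from $\mathds{R}^{*}\times\mathds{R}^{*}$ to all of $\mathds{R}^{2}$, since the valuation map $(x,y)\mapsto(\nu_2(x),\nu_2(y))$ is undefined where a coordinate vanishes and those are precisely the points through which a curve of $C$ could pass carrying a missing color. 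Deferring these is not a presentational shortcut; they are the content of the theorem, so what you have is a program rather than a proof.

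There are also concrete inaccuracies in the setup. The parameter $n$ is the projective dimension of the linear system, i.e.\ the number of lattice points of $P$ minus one (for conics $n=5$, giving six colors, even though the Newton triangle of a conic has no interior lattice points), so building the construction on ``the $n$ interior monomials'' inside a larger system is not the right object. Over $\mathds{R}$ the valuation image of the real points of a hyperplane is only \emph{contained in}, not equal to, the corresponding tropical hyperplane; that suffices for ``misses a region'' but leaves the nonemptiness and distribution claims you need for ``no color is confined within a curve'' unproved. Finally, Zariski-density of a color class in $\mathds{P}^{n}$ is not the condition you need after pulling back along $\phi: X_P \to \mathds{P}^{n}$: a color class $\phi^{-1}(U)$ lies inside the curve $\phi^{-1}(H)$ exactly when $U\cap\phi(X_P)\subseteq H$, so you must control the coloring restricted to the image surface, not the coloring of all of $\mathds{P}^{n}$. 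To obtain an actual proof you should follow the direct algebraic construction of \cite{HS} rather than its tropical reformulation.
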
 

As a specific instance of this theorem one gets a coloring of the plane in six colors, such that any conic contains at most five colors. 
\begin{figure}[hbtp]
\centering
\includegraphics[width=4.5in]{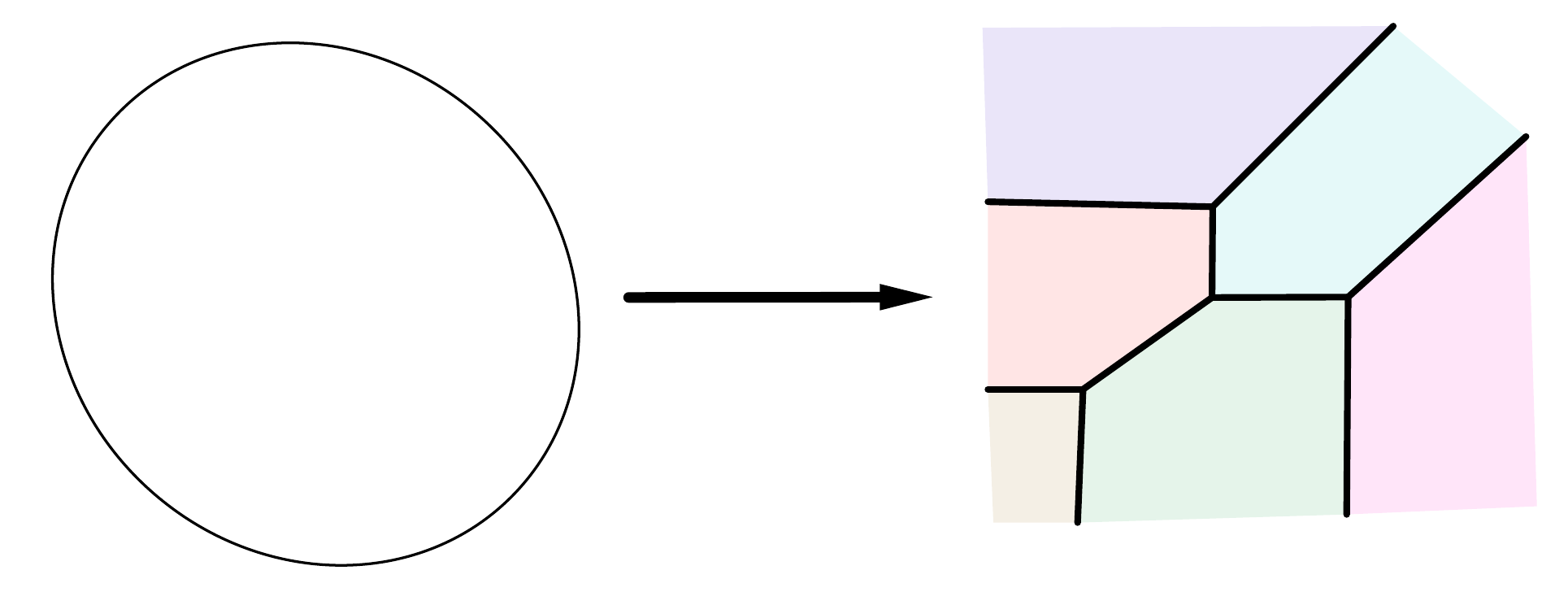}
\caption{The image of a conic under the  momentum map and the corresponding coloring}

\end{figure}

In \cite{HS} such colorings are constructed algebraically; here we will give a tropical explanation of this construction.  By $\mathds{R}^* $ we mean $\mathds{R} \setminus \{ 0 \}$. Let us consider algebraic torus $\mathds{R}^* \times \mathds{R}^*$ and "a momentum map":
$$m:\mathds{R}^* \times \mathds{R}^* \longrightarrow \mathds{R} ^2, $$
where
$$m \Big(  (x,y) \Big)=(\nu_2(x),\nu_2(y)) \in \mathds{R} ^2.$$

There exists a curve with Newton polygon $P$, whose image devides $\mathds{R} ^2$ into $n+1$ regions. We assign to them different colors.   It can be proved that the image of any other curve with Newton Polygon $P$ intersects at most $n$ regions. Now, we can color each point in $\mathds{R}^* \times \mathds{R}^* \in  \mathds{R} ^2$ in  the color of the region of $\mathds{R}^2$ containing its image.  This coloring is the same as that constructed by Hales and Straus.

This illustration shows that  colorings constructed in \cite{HS} are natural from the perspective, suggested by tropical geometry.  Unfortunatelly, it does not lead to an easier way of proving Theorem \ref{HS}, because of both combinatorial difficulties in analyzing the way in which two tropical curves intersect and algebraic difficulties in extending  the colorings from $\mathds{R}^* \times \mathds{R}^* $ to the whole $ \mathds{R} ^2$.

\end{document}